\theoremstyle{plain}
\newtheorem{theorem}{Theorem}
\newtheorem{lemma}[theorem]{Lemma}
\newtheorem{corollary}[theorem]{Corollary}
\theoremstyle{definition}
\newtheorem{definition}[theorem]{Definition}
\theoremstyle{remark}
\title{\bf New graceful diameter-6 trees\\by transfers}
\author{Matt Superdock\thanks{This paper is based on the author's undergraduate senior thesis; he graduated in 6/13.}\\
\small Department of Mathematics\\[-0.8ex]
\small Princeton University\\[-0.8ex] 
\small Princeton, NJ, USA\\
\small\tt msuperdock@gmail.com\\
}
\date{\small Jun 27, 2015\\
\small Mathematics Subject Classifications: 05C05, 05C78}
\begin{document}

\maketitle

\begin{abstract}

Given a graph $G$, a \emph{labeling} of $G$ is an injective function $f:V(G)\rightarrow\mathbb{Z}_{\ge 0}$. Under the labeling $f$, the \emph{label} of a vertex $v$ is $f(v)$, and the \emph{induced label} of an edge $uv$ is $|f(u) - f(v)|$.  The labeling $f$ is \emph{graceful} if the labels of the vertices are $\{0, 1, \ldots , |V(G)| - 1\}$, and the induced labels of the edges are distinct. The graph $G$ is \emph{graceful} if it has a graceful labeling. The Graceful Tree Conjecture, introduced by Kotzig in the late 1960's, states that all trees are graceful. It is an open problem whether every diameter-6 tree has a graceful labeling. In this paper, we prove that if $T$ is a tree with central vertex and root $v$, such that each vertex not in the last two levels has an odd number of children, and $T$ satisfies one of the following conditions (a)-(e), then $T$ has a graceful labeling $f$ with $f(v) = 0$: (a) $T$ is a diameter-6 complete tree; (b) $T$ is a diameter-6 tree such that no two leaves of distance 2 from $v$ are siblings, and each leaf of distance 2 from $v$ has a sibling with an even number of children; (c) $T$ is a diameter-$2r$ complete tree, such that the number of vertices of distance $r - 1$ from $v$, with an even number of children, is not $3\pmod{4}$; (d) $T$ is a diameter-$2r$ tree, such that the number of vertices of distance $r - 1$ from $v$, with an even number of children, is not $3\pmod{4}$, no two leaves of distance $r - 1$ from $v$ are siblings, and each leaf of distance $r - 1$ from $v$ has a sibling with an even number of children; (e) $T$ is a diameter-6 tree, such that each internal vertex has an odd number of children. In particular, all depth-3 trees of which each internal vertex has an odd number of children are graceful.

\bigskip\noindent \textbf{Keywords:} graph labeling; graceful tree conjecture; diameter-6; transfer

\end{abstract}

\section{Introduction}
\label{}

Given a graph $G$, a \emph{labeling} of $G$ is an injective function $f:V(G)\rightarrow\mathbb{Z}_{\ge 0}$.  Under the labeling $f$, the \emph{label} of a vertex $v$ is $f(v)$, and the \emph{induced label} of an edge $uv$ is $|f(u) - f(v)|$.  The labeling $f$ is \emph{graceful} if:
\begin{itemize}
\item The labels of the vertices are $\{0, 1, \ldots , |V(G)| - 1\}$, and
\item The induced labels of the edges are distinct.
\end{itemize}
Note that if $G$ is a tree, the induced labels must be $\{1, \ldots , |V(G)| - 1\}$.  The graph $G$ is \emph{graceful} if it has a graceful labeling.  The Graceful Tree Conjecture posits that all trees are graceful (Kotzig, see Bermond~\cite{survey0}). See Gallian's survey~\cite{survey4} for the current state of progress toward the conjecture.

In 2001, Hrn\v{c}iar \& Haviar~\cite{diameter5} introduced the idea of a \emph{transfer}, which they used to prove that all diameter-5 trees are graceful. This idea is best understood by example; see Fig.~\ref{fig1}. Each step of Fig.~\ref{fig1} is a \emph{transfer} of leaves, though Hrn\v{c}iar \& Haviar~\cite{diameter5} also introduce transfers of branches.  They established the following now-standard conventions:
\begin{itemize}
\item We identify vertices with labels; ``$k$" can refer to the vertex labeled $k$.
\item We use the notation $i\rightarrow j$ to represent a transfer from (vertex) $i$ to $j$.
\end{itemize}

\begin{figure}[!ht]
\begin{center}
\beginpgfgraphicnamed{transfer}
\begin{tikzpicture}[rounded corners=5pt, >=latex]

\node at (0.4, 17.1) {1.};
\node at (0.4, 11.6) {2.};
\node at (6.4, 17.1) {3.};
\node at (6.4, 11.6) {4.};
\node at (2.65, 6.1) {5.};

\draw (0,12.5) rectangle (5.5, 17.5);		
\draw (0, 7) rectangle (5.5, 12);			
\draw (6,12.5) rectangle (13.5, 17.5);		
\draw (6, 7) rectangle (13.5, 12);			
\draw (2.25, -0.5) rectangle (11.25, 6.5);	

\begin{scope}[shift={(0.5,14)}]
\Vertex[x=0,y=0]{1}
\Vertex[x=1,y=0]{2}
\draw[loosely dotted, very thick] (1.5,0) -- (1.9,0);
\Vertex[x=2.4,y=0]{10}
\Vertex[x=3.5,y=0]{11}
\Vertex[x=4.5,y=0]{12}
\Vertex[x=2.25,y=2]{0}
\Edge[label=$1$](0)(1)
\Edge[label=$12$](0)(12)
\Edge[label=$2$](0)(2)
\Edge[label=$10$](0)(10)
\Edge[label=$11$](0)(11)
\end{scope}

\begin{scope}[shift={(0.5,7.5)}]
\Vertex[x=0,y=0]{2}
\Vertex[x=1,y=0]{3}
\draw[loosely dotted, very thick] (1.5,0) -- (2,0);
\Vertex[x=2.5,y=0]{10}
\Vertex[x=1.25,y=2]{12}
\Vertex[x=2.25,y=2]{1}
\Vertex[x=3.25,y=2]{11}
\Vertex[x=2.25,y=4]{0}
\Edge[label=$10$](12)(2)
\Edge[label=$2$](12)(10)
\Edge[label=$12$](0)(12)
\Edge[label=$1$](0)(1)
\Edge[label=$11$](0)(11)
\Edge[label=$9$](12)(3)
\end{scope}

\begin{scope}[shift={(7,13)}]
\Vertex[x=-0.5,y=0]{2}
\Vertex[x=0.5,y=0]{3}
\Vertex[x=1.5,y=0]{4}
\draw[loosely dotted, very thick] (2,0) -- (2.5,0);
\Vertex[x=3,y=0]{8}
\Vertex[x=4,y=0]{9}
\Vertex[x=5,y=0]{10}
\Vertex[x=0.25,y=2]{12}
\Vertex[x=2.75,y=2]{1}
\Vertex[x=5.25,y=2]{11}
\Vertex[x=2.75,y=4]{0}
\Edge[label=$10$](12)(2)
\Edge[label=$2$](1)(3)
\Edge[label=$9$](1)(10)
\Edge[label=$12$](0)(12)
\Edge[label=$1$](0)(1)
\Edge[label=$11$](0)(11)
\Edge[label=$3$](1)(4)
\Edge[label=$7$](1)(8)
\Edge[label=$8$](1)(9)
\end{scope}

\begin{scope}[shift={(6.5,7.5)}]
\Vertex[x=0,y=0]{2}
\Vertex[x=1,y=0]{10}
\Vertex[x=2,y=0]{3}
\Vertex[x=3,y=0]{9}
\Vertex[x=4,y=0]{4}
\Vertex[x=5,y=0]{5}
\draw[loosely dotted, very thick] (5.5,0) -- (6,0);
\Vertex[x=6.5,y=0]{8}
\Vertex[x=0.75,y=2]{12}
\Vertex[x=3.25,y=2]{1}
\Vertex[x=5.75,y=2]{11}
\Vertex[x=3.25,y=4]{0}
\Edge[label=$10$](12)(2)
\Edge[label=$2$](1)(3)
\Edge[label=$9$](1)(10)
\Edge[label=$8$](1)(9)
\Edge[label=$7$](11)(4)
\Edge[label=$3$](11)(8)
\Edge[label=$12$](0)(12)
\Edge[label=$1$](0)(1)
\Edge[label=$11$](0)(11)
\Edge[label=$6$](11)(5)
\end{scope}

\begin{scope}[shift={(2.75,0)}]
\Vertex[x=0,y=0]{5}
\Vertex[x=1,y=0]{6}
\Vertex[x=2,y=0]{7}
\Vertex[x=3,y=0]{8}
\Vertex[x=1.5,y=2]{2}
\Vertex[x=3,y=2]{10}
\Vertex[x=4,y=2]{3}
\Vertex[x=5,y=2]{9}
\Vertex[x=6.5,y=2]{4}
\Vertex[x=1.5,y=4]{12}
\Vertex[x=4,y=4]{1}
\Vertex[x=6.5,y=4]{11}
\Vertex[x=4,y=6]{0}
\Edge[label=$12$](0)(12)
\Edge[label=$1$](0)(1)
\Edge[label=$11$](0)(11)
\Edge[label=$10$](12)(2)
\Edge[label=$9$](1)(10)
\Edge[label=$2$](1)(3)
\Edge[label=$8$](1)(9)
\Edge[label=$7$](11)(4)
\Edge[label=$3$](2)(5)
\Edge[label=$4$](2)(6)
\Edge[label=$5$](2)(7)
\Edge[label=$6$](2)(8)
\end{scope}
\end{tikzpicture}

\endpgfgraphicnamed
\end{center}
\caption{Performing a sequence of transfers on a gracefully labeled star.}
\label{fig1}
\end{figure}
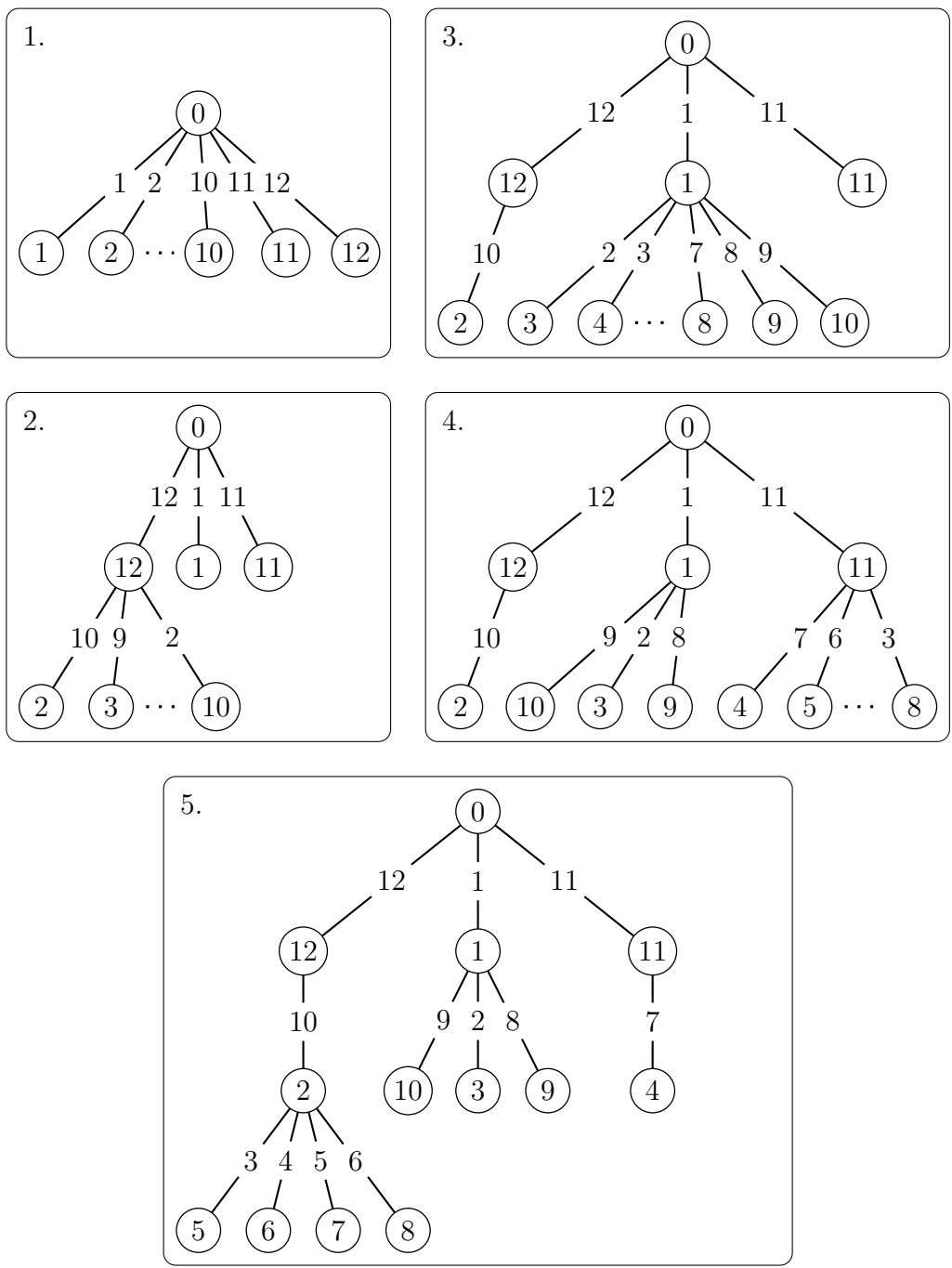

For example, we describe the transfers in Fig.~\ref{fig1} as follows:
$$0\rightarrow 12\rightarrow 1\rightarrow 11\rightarrow 2$$
Our example (Fig.~\ref{fig1}) reflects the standard strategy of Hrn\v{c}iar \& Haviar~\cite{diameter5}, of beginning with a gracefully labeled star and using the following transfers:
$$0\rightarrow n\rightarrow 1\rightarrow n - 1\rightarrow\cdots$$
As in Fig.~\ref{fig1}, this sequence of transfers builds trees in a breadth-first way (see Lemma~\ref{order}). This strategy has since been applied to many other classes of trees (see \cite{banana, fixed-banana, jesinthageneration, mishra2005graceful, mishra2006graceful, mishra2008some, mishra2010some, mishra2011some, sethuraman1, sethuraman3, sethuraman2}). In this paper we expand the strategy considerably, yielding new best results on diameter-6 trees.

\subsection{Type-1 and Type-2 Transfers}

Hrn\v{c}iar \& Haviar~\cite{diameter5} introduced two basic types of transfers of leaves.  Their definitions and important properties (assuming the standard sequence of transfers $0\rightarrow n\rightarrow 1\rightarrow\cdots$) are as follows:
\begin{itemize}
\item A \emph{type-1 transfer} is a transfer $u\rightarrow v$ of leaves $k$, $k + 1, \ldots , k + m$, possible if $f(u) + f(v) = k + (k + m)$.  A type-1 transfer leaves behind an odd number of leaves and can be followed by a type-1 or type-2 transfer.
\item A \emph{type-2 transfer} is a transfer $u\rightarrow v$ of leaves $k$, $k + 1, \ldots , k + m$, $l$, $l + 1, \ldots , l + m$, possible if $f(u) + f(v) = k + (l + m)$.  A type-2 transfer leaves behind an odd or even number of leaves, but must leave behind an even number if it follows a type-2 transfer, and can only be followed by a type-2 transfer.
\end{itemize}
In Fig.~\ref{fig1}, all four transfers are type-1, leaving behind odd numbers of leaves.  From the final tree, we could, e.g., perform a type-2 transfer $2\rightarrow 10$, transferring the leaves $5, 7$, and leaving the vertices $2, 10$ each with two children.

In the same way, the standard approach is to perform type-1 transfers to obtain some vertices with odd numbers of children, and then switch to type-2 transfers to obtain some vertices with even numbers of children. However, in the course of this paper we show that the results of this approach can be replicated using only type-1 transfers, implying that type-2 transfers are not strictly necessary.

\subsection{Rearranging Subtrees}

Using transfers often requires rearranging the subtrees of a tree, as in the cases of \emph{banana trees} and \emph{generalized banana trees}.

A \emph{banana tree} is a tree obtained from a collection of stars $K_{1, m_{i}}$ with $m_{i}\ge 1$ by joining a leaf of each star by an edge to a new vertex $v$, called the \emph{apex}.  A \emph{generalized banana tree} is obtained from a banana tree by replacing the edges incident with $v$ with paths of fixed length $h\ge 0$.

\begin{theorem}
{\normalfont (Hrn\v{c}iar \& Monoszova~\cite{banana})} Let $T$ be a generalized banana tree, with apex $v$ of odd degree.  Then $T$ has a graceful labeling $f$ with $f(v) = 0$.
\end{theorem}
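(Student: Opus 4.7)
The plan is to construct the labeling by starting from a gracefully labeled star $K_{1,n}$ with $n = |V(T)| - 1$ and center labeled $0$ (to serve as the apex $v$), and then applying the standard sequence of transfers $0 \to n \to 1 \to n-1 \to 2 \to \cdots$. Each transfer will be calibrated to deposit a block of consecutive leaf-labels at the appropriate internal vertex of $T$, so that after the whole sequence the resulting labeled tree is isomorphic to $T$ with $f(v) = 0$.

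First, I would tune the initial transfer $0 \to n$ so that it leaves exactly $\deg(v)$ children at $0$. Since $\deg(v)$ is odd by hypothesis and a type-1 transfer always leaves an odd number of children at its source, a suitable type-1 transfer is available. The $\deg(v)$ children remaining at $0$ become the first vertices on the $\deg(v)$ paths emanating from $v$. I would then continue with type-1 transfers in the standard order along each of these paths, at each step leaving exactly one child behind so that the source becomes a degree-$2$ path vertex, until the chosen leaves $\ell_i$ are reached.

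At the level of the star centers $c_i$, the required child-count is $m_i - 1$, which may be even. Here I would transition from type-1 to type-2 transfers. Since a banana tree is invariant under permutation of its constituent stars, I have the freedom to order the $K_{1,m_i}$ along the transfer sequence; I would place the stars with $m_i$ even (so $m_i - 1$ is odd) first, handled by type-1 transfers, and then the stars with $m_i$ odd (so $m_i - 1$ is even), handled by type-2 transfers. The rule that the first type-2 transfer may still leave an odd count, while subsequent type-2s must leave even counts, combined with the freedom to place the very last star at the terminal position of the sequence (where any count can be absorbed), gives exactly enough flexibility to match any pattern of parities among the $m_i$.

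The main obstacle will be the arithmetic bookkeeping. At each step of the sequence, one must verify that the source vertex still holds a contiguous run of labels $k, k+1, \ldots, k+m$ (or a pair of such runs, for type-2) with $f(u) + f(v)$ equal to the required sum, so that the prescribed leaves can actually be transferred. I would handle this by an induction along the transfer sequence, tracking at each stage the two contiguous intervals of labels surviving at the current source. A secondary delicate point is matching each specific star $K_{1,m_i}$, and each path, to its position in the alternating $0, n, 1, n-1, \ldots$ order, which again is resolved by the combinatorial freedom to permute stars and to assign endpoints to paths.
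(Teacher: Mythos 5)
Your overall strategy is the same as the paper's: begin with a gracefully labeled star centered at $v = 0$, run the standard chain $0 \to n \to 1 \to n-1 \to \cdots$, use the odd-degree hypothesis to leave $\deg(v)$ children at the initial $0 \to n$ step, reorder the constituent stars so that those with $m_i$ even (hence $m_i - 1$ odd) come first and are handled by type-1 transfers, and switch to type-2 transfers for the stars with $m_i$ odd. Up to that point you are reproducing the paper's (very terse) proof.

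There is, however, a concrete gap in your treatment of stars with $m_i = 1$, which the definition explicitly allows. Their centers must end up with $m_i - 1 = 0$ leaf children, and no transfer in the chain can leave $0$ leaves behind. A type-1 transfer leaves an odd number $\ge 1$. For a type-2 transfer $u \to w$ acting on an interval $[c, d]$ deposited at $u$ by the previous step, we have $f(u_{\mathrm{prev}}) + f(u) = c + d$ and hence $f(u) + f(w) = c + d \pm 1$; transferring \emph{all} of $[c,d]$ as two equal blocks $[k, k+m]$, $[l, l+m]$ would force $k = c$, $l + m = d$, $l = k + m + 1$, giving $k + (l+m) = c + d \neq c + d \pm 1$, a contradiction. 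So every transfer strands at least one leaf, and a center requiring $0$ children cannot be an interior vertex of the chain. Your escape hatch --- ``the freedom to place the very last star at the terminal position of the sequence (where any count can be absorbed)'' --- rescues at most one exceptional star, and not even that one: the terminal vertex of the chain keeps \emph{all} remaining leaves, so it keeps $0$ only if the chain has already ended. The repair is exactly the paper's third group: order the stars as ($m_i$ even), ($m_i$ odd with $m_i > 1$), ($m_i = 1$), so that the chain terminates before reaching any center that needs $0$ children; those centers then simply remain as leaves deposited at the previous level and never serve as sources.
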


\begin{proof}
Rearrange subtrees so that the stars occur in the following order:
\begin{enumerate}
\item Stars $K_{1, m_{i}}$ with $m_{i}$ even.
\item Stars $K_{1, m_{i}}$ with $m_{i}$ odd and $m_{i} > 1$.
\item Stars $K_{1, m_{i}}$ with $m_{i} = 1$.
\end{enumerate}
Consider a gracefully labeled star with central vertex labeled 0, and apply type-1 transfers $0\rightarrow n\rightarrow 1\rightarrow\cdots$, switching to type-2 at the last level.
\end{proof}

By attaching a caterpillar (see Lemma~2 of Hrn\v{c}iar \& Haviar~\cite{diameter5}), we get:

\begin{theorem}
{\normalfont (Hrn\v{c}iar \& Monoszova~\cite{banana})} All generalized banana trees are graceful.
\end{theorem}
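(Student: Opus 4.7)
The plan is to reduce to the previous theorem by a case split on the parity of the degree of the apex $v$, using a caterpillar-attachment lemma to handle the even case. If $\deg(v)$ is odd, then the previous theorem applies directly and yields a graceful labeling with $f(v) = 0$, which in particular is a graceful labeling.

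Suppose instead that $\deg(v)$ is even. I would detach a single branch of $T$ at $v$: a branch consists of the path of length $h$ from $v$ ending at the chosen leaf of some star $K_{1,m_i}$, together with the remaining $m_i - 1$ leaves hanging from that star's center. The tree $T'$ obtained by deleting this branch (but keeping $v$) is again a generalized banana tree, now with apex $v$ of odd degree $\deg(v) - 1$. By the previous theorem, $T'$ admits a graceful labeling $f'$ with $f'(v) = 0$. Moreover, the removed branch, viewed with $v$ as its root, is a caterpillar: its spine is the path from $v$ down through the chosen leaf to the center of the star, and the $m_i - 1$ remaining leaves of the star hang off the end of that spine.

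To finish, I would invoke Lemma~2 of Hrn\v{c}iar \& Haviar~\cite{diameter5} to extend $f'$ to a graceful labeling of all of $T$ by attaching the caterpillar back at the vertex labeled $0$. The main obstacle is confirming that the precise hypotheses of that lemma—concerning the structure of the caterpillar and the placement of its attachment point at the vertex labeled $0$—are met in our setting; however, since this is exactly the kind of extension the lemma is cited for repeatedly in the literature, this should reduce to a routine verification rather than a new idea.
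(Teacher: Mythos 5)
Your proposal is correct and matches the paper's argument: the paper's entire proof is the remark ``by attaching a caterpillar (Lemma~2 of Hrn\v{c}iar \& Haviar) to Theorem~1,'' which is exactly your even-degree case of detaching one branch (a caterpillar), gracefully labeling the remaining odd-degree generalized banana tree with the apex labeled $0$, and reattaching via that lemma. You have simply spelled out the details the paper leaves implicit.
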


Jesintha \& Sethuraman~\cite{fixed-banana, jesinthageneration, sethuraman1, sethuraman3, sethuraman2} have generalized this result further, reapplying the strategy of rearranging subtrees of a tree at the apex. In this paper, we attain new results by rearranging smaller subtrees of a tree.

\subsection{Alternative Sequences of Transfers}

In order to overcome the limitations of type-1 and type-2 transfers, Hrn\v{c}iar \& Haviar~\cite{diameter5} introduced the \emph{backwards double 8 transfer}, an alternative sequence of type-1 transfers
$$0\rightarrow n\rightarrow 1\rightarrow n - 1\rightarrow 0\rightarrow n\rightarrow 1\rightarrow n - 1\rightarrow 2$$
This sequence of transfers leaves an even number of leaves at each of $0$, $n$, $1$, $n - 1$, and can be followed by returning to the standard sequence of transfers $2\rightarrow n - 2\rightarrow 3\rightarrow\cdots$ with type-1 or type-2 transfers.  Using this idea, Hrn\v{c}iar \& Haviar~\cite{diameter5} prove their main result:

\begin{theorem}
{\normalfont (Hrn\v{c}iar \& Haviar~\cite{diameter5})} All diameter-5 trees are graceful.
\end{theorem}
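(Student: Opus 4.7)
The plan is to root the diameter-$5$ tree $T$ at an endpoint $r$ of its central edge (so $T$ becomes a rooted tree of depth $3$) and build a graceful labeling $f$ with $f(r) = 0$ by starting from a gracefully labeled star and applying a carefully chosen transfer sequence, in the spirit of the generalized banana tree proof but one level deeper. Set $n = |V(T)| - 1$ and start with the star $K_{1,n}$ whose center is labeled $0$ and whose leaves are $1, 2, \ldots, n$.

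First I would rearrange the subtrees of $T$ hanging off $r$ (and, inside each of those, the children of each level-$1$ vertex) into a canonical order determined by parity. With $r_1, \ldots, r_k$ the children of $r$, I would place the $r_i$ whose subtrees require odd counts of grandchildren (and, recursively, odd leaf-counts at level $2$) first, and those requiring even counts last, with a similar rearrangement inside each $r_i$. The transfer sequence $0 \to n \to 1 \to n-1 \to 2 \to \cdots$ produces the level-$1$ vertices at labels $n, 1, n-1, 2, \ldots$, and each transfer is tuned to realize the desired grandchild count; the type-$1$-to-type-$2$ switch is made at the boundary between odd- and even-count subtrees.

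The main obstacle is the depth: unlike banana trees, we must simultaneously control the parities at two levels, subject to the rule that type-$2$ transfers cannot be followed by type-$1$. For most diameter-$5$ trees a plain type-$1$/type-$2$ schedule suffices, but for trees with up to four ``awkward'' even-count subtrees among their first level-$1$ positions, the backwards double 8 transfer $0 \to n \to 1 \to n-1 \to 0 \to n \to 1 \to n-1 \to 2$ is invoked: it yields even grandchild counts at each of $0, n, 1, n-1$ while staying in type-$1$, so the standard sequence $2 \to n-2 \to 3 \to \cdots$ can resume. The hardest part will be the case analysis: verifying that every parity profile at levels $1$ and $2$ in a diameter-$5$ tree falls into one of these schedules (possibly after rearrangement) and that degenerate cases (very small $k$, or all level-$2$ vertices being leaves) reduce to the generalized banana tree theorem or are directly verifiable.
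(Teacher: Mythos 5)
First, note that the paper does not prove this theorem at all: it is quoted from Hrn\v{c}iar \& Haviar~\cite{diameter5}, whose published proof is a lengthy case analysis using transfers of leaves \emph{and} of branches, the backwards double 8 transfer, and a caterpillar-attachment lemma. So there is no in-paper argument to compare against; your proposal must stand on its own, and as written it does not. What you have is a correct high-level strategy (root at an endpoint of the central edge, start from a labeled star, build breadth-first by transfers, reorder subtrees by parity, use the backwards double 8 to escape the type-1/type-2 parity trap), but the entire mathematical content of the theorem is the exhaustive verification that \emph{every} parity profile arising from a diameter-5 tree is realizable by some admissible schedule. You explicitly defer this (``the hardest part will be the case analysis'') rather than carry it out, so the proposal is a plan, not a proof.

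Two concrete points where the plan, as stated, would not go through. First, the backwards double 8 transfer realizes exactly the pattern $e,e,e,e$ (even residues at each of $0, n, 1, n-1$); it does not by itself handle an arbitrary number of ``awkward'' even-count positions, and you give no argument for why the remaining configurations (one, two, three, or more than four even-count vertices interleaved with odd ones, especially at the last level where type-2 transfers impose the constraint that an even number must be left behind and only type-2 may follow) are all coverable. This is precisely the obstruction that forces Hrn\v{c}iar \& Haviar into many cases, including transfers of branches rather than leaves, which your sketch never mentions. Second, rooting at an endpoint $u$ of the central edge does not give a generic depth-3 tree: all vertices on $u$'s side of the central edge lie within distance 2 of $u$, so only the single branch through the other central vertex $v$ reaches depth 3. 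Your proposed ``rearrange the subtrees of $r$ by parity'' treats the branches as interchangeable, but the one deep branch is structurally distinguished and must occupy a specific position in the transfer order; the argument needs to account for this asymmetry explicitly.
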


In this paper, we introduce several new and more general sequences of transfers.

\subsection{New Results}

By extending the transfer technique as noted above, we prove the following. We refer to a rooted tree with leaves only in the last level as a \emph{complete} tree.

\begin{theorem} \label{main_theorem}
Let $T$ be a tree with central vertex and root $v$, such that each vertex not in the last (farthest from the root) two levels has an odd number of children.  Also, suppose $T$ satisfies one of the following sets of conditions:
\begin{enumerate}[(a)]
\item $T$ is a diameter-6 complete tree.
\item $T$ is a diameter-6 tree, such that
\begin{itemize}
\item No two leaves of distance 2 from $v$ are siblings.
\item Each leaf of distance 2 from $v$ has a sibling with an even number of children.
\end{itemize}
\item $T$ is a diameter-$2r$ complete tree, such that the number of vertices of distance $r - 1$ from $v$, with an even number of children, is not $3\pmod{4}$.
\item $T$ is a diameter-$2r$ tree, such that the number of vertices of distance $r - 1$ from $v$, with an even number of children, is not $3\pmod{4}$, and
\begin{itemize}
\item No two leaves of distance $r - 1$ from $v$ are siblings.
\item Each leaf of distance $r - 1$ from $v$ has a sibling with an even number of children.
\end{itemize}
\item $T$ is a diameter-6 tree, such that each internal vertex has an odd number of children.
\end{enumerate}
Then $T$ has a graceful labeling $f$ with $f(v) = 0$.
\end{theorem}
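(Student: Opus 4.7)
The plan is to construct the graceful labeling directly via transfers, starting from a gracefully labeled star $K_{1,n-1}$ centered at $v$ with $f(v) = 0$ and $n = |V(T)|$, then applying a sequence of type-1 transfers beginning with the standard pattern $0 \to n \to 1 \to n-1 \to 2 \to \cdots$. I would first establish a structural lemma (essentially the order lemma the paper already alludes to): in the standard sequence, the $k$-th transfer determines the children of the $k$-th non-root vertex in breadth-first order, with the source vertex ending up with an odd number of children. Consequently, the prescribed odd children-counts outside the last two levels can be realized by a suitable choice of transfer sizes, after a permissible rearrangement of subtrees at each level so that the breadth-first traversal matches the transfer sequence. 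Part (e) then falls out essentially immediately: every internal vertex has odd children-count, so only type-1 transfers are needed throughout, and the standard sequence builds exactly the desired tree.

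Cases (a)--(d) are more delicate because the second-to-last level (distance $r-1$ from $v$) may contain vertices with even children-counts, and a single type-1 transfer cannot produce such a vertex. The classical remedy would be to switch to type-2 transfers, but the paper signals that type-1 transfers alone suffice. I would therefore design new alternative sequences of type-1 transfers, generalizing the backward double $8$ transfer, that collectively emulate the effect of a type-2 transfer at the last two levels. Each such alternative sequence consumes level-$(r-1)$ vertices with even children-counts in bundles (essentially pairs), which explains the $3 \pmod{4}$ obstruction in (c) and (d): a residue of $3$ would leave an unpaired even-children vertex that no alternative sequence in this family can handle. Parts (a) and (c) then reduce to the complete case (no leaves at level $r-1$), while parts (b) and (d) exploit the sibling hypotheses --- no two level-$(r-1)$ leaves are siblings, and each has a sibling with an even number of children --- to embed such leaves into the transfer scheme as small sibling attachments to an even-children vertex, preserving the pairing.

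The main obstacle is the combinatorial design and verification of the new alternative transfer sequences. Each candidate must be shown to preserve the multiset of induced edge labels, to leave the expected parity of children behind at each source, and to be compatible both with its predecessors and with whatever standard or alternative sequence comes next, so that the construction terminates in a valid graceful labeling with $f(v)=0$. This will require a moderately intricate case analysis, parameterized by the residue modulo $4$ of the number of even-children vertices at level $r-1$ and by the complete-versus-leafy distinction; along the way it would be worth explicitly identifying where the $3 \pmod{4}$ case fails, to confirm that the hypothesis is genuinely tight for this technique. Once the alternative sequences are in hand, cases (a)--(d) should follow by assembling them into the overall transfer scheme, and (e) reduces to the uniform case already handled by the order lemma.
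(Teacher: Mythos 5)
Your overall direction---type-1 transfers only, new alternative sequences generalizing the backward double-8, rearrangement of subtrees, and a mod-4 case analysis---is the same as the paper's, but there are two concrete problems. First, your dispatch of case (e) is wrong: the standard sequence $0\rightarrow n\rightarrow 1\rightarrow n-1\rightarrow\cdots$ does \emph{not} ``build exactly the desired tree'' there, and rearranging subtrees cannot always repair it. The obstruction is that breadth-first order groups level-2 vertices by parent, so the level-2 leaves are in general interleaved with the level-2 internal vertices (e.g.\ if two different children of $v$ each have both leaf children and internal children); a type-1 transfer can neither deposit zero leaves at a vertex nor skip vertices of the alternating label sequence without changing the parity of what is left behind (Lemma~\ref{transfer_parity}), so the monotone sequence gets stuck at an interposed leaf. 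The paper's proof of (e) requires a genuinely new device: the subtrees at $v$ are sorted into four classes by the parities of their numbers of internal versus leaf children, and a three-pass back-and-forth sequence of transfers through the level-1 vertices deposits \emph{all} internal level-2 vertices before \emph{any} level-2 leaf, after which the continuation building level 3 can stop cleanly. This idea is absent from your plan.

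Second, for (a)--(d) you correctly locate the difficulty at the even-children vertices of level $r-1$, but the entire combinatorial core is deferred to ``a moderately intricate case analysis,'' and the one structural claim you do make---that the alternative sequences consume even-children vertices in pairs, which would explain the $3\pmod 4$ hypothesis---is not right: a pairing obstruction would equally exclude $1\pmod 4$, which the theorem allows. The paper's actual mechanism is a classification of which \emph{parity patterns} of leaf-counts are realizable by well-behaved type-1 transfer sequences (the ``attainable'' and ``nicely attainable'' sequences), a proof that any admissible list of sibling-groups can be permuted into a concatenation of nicely attainable blocks followed by an irreducible tail (verified by finite tables after reducing each group to a representative determined by its number of evens mod 4), and, for general $r$ in (c)--(d), an induction over the tree using the ``endings'' $\varnothing, E1, E2, E2', E3$; the $3\pmod 4$ case fails precisely because the only ending then reachable from $\varnothing$ is $E3$, whose sequences (of the form $\ldots,e,e,o,\ldots,o,e$) are not attainable. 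Without identifying and verifying some such family of realizable patterns and a decomposition theorem for arbitrary lists of children-counts, your plan does not yet contain a proof of (a)--(d).
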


We can attach an arbitrary caterpillar (see Lemma~2 of Hrn\v{c}iar \& Haviar~\cite{diameter5}) to the root of any of these trees and obtain another graceful tree. In particular, this gives the following nice class of trees:

\begin{corollary}
Let $T$ be a diameter-6 tree with central vertex and root $v$, such that each internal vertex has an odd number of children.  Then $T$ has a graceful labeling $f$ with $f(v) = 0$.
\end{corollary}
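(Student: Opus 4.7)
The plan is to derive the corollary as an immediate specialization of Theorem~\ref{main_theorem}(e). The key observation is that in a diameter-$6$ tree rooted at its central vertex $v$, the depth is exactly $3$, so the ``last two levels'' in the statement of Theorem~\ref{main_theorem} are precisely the vertices at distances $2$ and $3$ from $v$. Hence the blanket hypothesis of the main theorem reduces to a condition only on $v$ and its children at distance $1$: each must have an odd number of children.

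Next I would verify that the corollary's hypothesis ``each internal vertex has an odd number of children'' supplies exactly what Theorem~\ref{main_theorem}(e) needs. Since $v$ is internal, $v$ has an odd number of children; every distance-$1$ vertex that is internal has an odd number of children; and the case (e) hypothesis on distance-$2$ internal vertices is also directly given. Once these checks are in place, Theorem~\ref{main_theorem}(e) applies and yields a graceful labeling $f$ with $f(v) = 0$.

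The one remaining subtlety is that the corollary's hypothesis does not literally exclude leaf children of $v$, whereas the blanket hypothesis of Theorem~\ref{main_theorem} does (a leaf has zero children, which is even). I would address this by peeling off any level-$1$ leaves of $v$, applying Theorem~\ref{main_theorem}(e) to what remains, and then reattaching those leaves using the caterpillar-attachment result of Hrn\v{c}iar \& Haviar~\cite{diameter5} cited in the remark immediately preceding the corollary. The main obstacle in this step is a small parity check: stripping $\ell$ leaves flips the parity of $v$'s child-count when $\ell$ is odd. I would absorb this by grouping one of the stripped leaves into the attached caterpillar rather than into the interior tree, thereby preserving the odd-parity condition on $v$'s children throughout the application of Theorem~\ref{main_theorem}(e).
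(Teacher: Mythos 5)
Your overall route is the same as the paper's: the paper's entire proof of this corollary is ``Starting with a tree from Thm.~\ref{main_theorem}(e), attach leaves to the root,'' i.e., strip the level-1 leaves, apply Theorem~\ref{main_theorem}(e) to what remains, and reattach the stripped leaves at the vertex labeled $0$ via Lemma~2 of Hrn\v{c}iar \& Haviar~\cite{diameter5}. Your reduction of the blanket hypothesis to levels $0$ and $1$, and your identification of level-$1$ leaves as the only discrepancy between the corollary and case (e), are both correct, and you are right to flag a parity issue that the one-line proof glosses over.

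The problem is that your fix for that parity issue does not do anything. Write $k$ for the number of children of $v$ (odd, by hypothesis) and $\ell$ for the number of those children that are leaves. The interior tree $T'$ obtained by deleting all $\ell$ level-$1$ leaves has root degree $k-\ell$, which is even exactly when $\ell$ is odd, and then Theorem~\ref{main_theorem}(e) does not apply to $T'$. ``Grouping one of the stripped leaves into the attached caterpillar rather than into the interior tree'' is vacuous: every stripped leaf already lives in the attached caterpillar (which here is just a star of pendant edges at $v$), so no regrouping changes $T'$. The only alternative---leaving one leaf child of $v$ inside the interior tree---restores odd root degree but reintroduces a level-$1$ vertex with zero (hence even) children, violating the blanket hypothesis of Theorem~\ref{main_theorem}, so case (e) still does not apply. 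Concretely, if $v$ has three children, one a leaf and two of them tops of depth-$3$ paths, then $T$ satisfies the corollary but $T'$ has root degree $2$ and is outside case (e); read literally, the paper's one-line proof has the same gap here. Closing the odd-$\ell$ case needs a genuinely new step, e.g., rerunning the transfer argument from the proof of Theorem~\ref{main_theorem}(e) with the level-$1$ leaves admitted as an additional class of subtrees at $v$, rather than invoking case (e) as a black box.
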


\begin{proof}
Starting with a tree from Thm.~\ref{main_theorem}(e), attach leaves to the root.
\end{proof}


\section{Preliminary Results}
\label{preliminary}

We now work toward defining \emph{attainable} and \emph{nicely attainable} sequences.  Essentially,
\begin{itemize}
\item
An \emph{attainable} sequence is a list of numbers of leaves that can be left behind by a \emph{well-behaved} sequence of transfers.
\item
A \emph{nicely attainable} sequence is a list of numbers of leaves that can be left behind by a \emph{well-behaved} sequence of transfers, such that the sequence of transfers can continue onward.
\end{itemize}

\begin{definition} \label{transfer_context}
Let $T$ be a tree, let $f$ be a graceful labeling of $T$, let $v_{1}, \ldots , v_{m}$ be a sequence of vertices of $T$, and let $a, b, c, d$ be integers.  We say that $(T, f, v_{1}, \ldots , v_{m}, a, b, c, d)$ is a \emph{transfer context} if the following conditions hold:
\begin{itemize}
\item The labels of $v_{1}, \ldots , v_{m}$, in order, are
$$a,\; b - 1,\; a + 1,\; b - 2,\ldots\qquad\text{or}\qquad a,\; b + 1,\; a - 1,\; b + 2,\ldots$$
\item $v_{1}$ is adjacent to leaves with labels $c, \; c + 1, \ldots ,\; d$.\\
($v_{1}$ may also be adjacent to other leaves.)
\item $a + b = c + d$.\\
(This ensures that the labels of the leaves at $v_{1}$ are the same as if the leaves were just transferred from a previous vertex $v_{0}$ with label $b$.)

\end{itemize}
\end{definition}

The two possible sequences of labels share the essential characteristic of the standard sequence $0\rightarrow n\rightarrow 1\rightarrow\cdots$, that the sum of adjacent labels alternates between constants $k$, $k + 1$ at each step.

\begin{definition}
Given a transfer context $(T, f, v_{1}, \ldots , v_{m}, a, b, c, d)$, a \emph{well-behaved} sequence of transfers is a finite sequence of transfers
$$v_{i_{1}}\rightarrow v_{i_{2}}\rightarrow\cdots\rightarrow v_{i_{k}}$$
with $i_{1} = 1$, such that
\begin{itemize}
\item All transfers are type-1 transfers.
\item The set of leaves transferred in the first step is a subset of the set of leaves $\{c,\ldots , d\}$.  The set of leaves transferred in each subsequent step is a subset of the leaves transferred in the previous step.
\item Each transfer $v_{i_{j}}\rightarrow v_{i_{j + 1}}$ has indices $i_{j}, i_{j + 1}$ of different parity.
\end{itemize}
The \emph{result} of a well-behaved sequence of transfers is the sequence $n_{1}, \ldots , n_{m}$ of non-negative integers, such that after performing the transfers, each $v_{k}$ is adjacent to $n_{k}$ of the leaves $c,\ldots , d$ originally adjacent to $v_{1}$.
\end{definition}

\begin{definition}
A sequence $n_{1}, \ldots , n_{m}$ of non-negative integers is \emph{attainable} if, for any transfer context $(T, f, v_{1}, \ldots , v_{m}, a, b, c, d)$ with
$$n_{1} + \cdots + n_{m} = |\{c, \ldots , d\}|,$$
there exists a well-behaved sequence of transfers with result $n_{1}, \ldots , n_{m}$.
\end{definition}

\begin{definition}
A sequence $n_{1}, \ldots , n_{m}$ of non-negative integers is \emph{nicely attainable} if, for any positive integer $n_{m + 1}$, and for any transfer context $(T, f, v_{1}, \allowbreak \ldots , v_{m + 1}, a, b, c, d)$ with
$$n_{1} + \cdots + n_{m + 1} = |\{c, \ldots , d\}|,$$
there exists a well-behaved sequence of transfers with result $n_{1}, \ldots , n_{m + 1}$, such that the last transfer is $v_{m}\rightarrow v_{m + 1}$, and $v_{m + 1}$ occurs in no other transfer.
\end{definition}

\begin{lemma}\label{combine_attainable}
The following results hold:
\begin{enumerate}[(a)]
\item If the sequence $n_{1}, \ldots , n_{m}$ is nicely attainable, and the sequence $n_{1}', \ldots , n_{m'}'$ is attainable, then the sequence $n_{1}, \ldots , n_{m}, n_{1}', \ldots , n_{m'}'$ is attainable.
\item If the sequence $n_{1}, \ldots , n_{m}$ is nicely attainable, and the sequence $n_{1}', \ldots , n_{m'}'$ is nicely attainable, then the sequence $n_{1}, \ldots , n_{m}, n_{1}', \ldots , n_{m'}'$ is nicely attainable.
\end{enumerate}
\end{lemma}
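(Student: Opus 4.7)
The plan is to realize each combined transfer sequence as a concatenation of two well-behaved sequences meeting at $v_{m+1}$: first apply the nicely attainable hypothesis for the prefix block, with a ``terminal'' leaf count equal to the total of the suffix, and then apply (nicely) attainable for the suffix in the resulting tree. Fix a transfer context $(T, f, v_1, \ldots, v_{m+m'}, a, b, c, d)$ for the combined sequence (for part~(b), extend to $v_{m+m'+1}$ with an arbitrary positive $n_{m+m'+1}'$) and let $N = n_1' + \cdots + n_{m'}'$. The case $N = 0$ is essentially vacuous, so I assume $N \geq 1$. The truncation $(T, f, v_1, \ldots, v_m, v_{m+1}, a, b, c, d)$ is itself a transfer context, so by nice attainability of $n_1, \ldots, n_m$ with $n_{m+1} = N$, there is a well-behaved sequence $S_1$ with result $n_1, \ldots, n_m, N$, whose last transfer is $v_m \rightarrow v_{m+1}$ and in which $v_{m+1}$ occurs in no other transfer. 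After $S_1$ the vertex $v_{m+1}$ is adjacent to exactly the $N$ leaves moved by that final type-1 transfer, namely a contiguous block $\{c', c'+1, \ldots, d'\}$ with $c' + d' = f(v_m) + f(v_{m+1})$.

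Next I would check that $(T_1, f, v_{m+1}, v_{m+2}, \ldots, v_{m+m'}, a', b', c', d')$ is itself a transfer context (where $T_1$ is the tree after $S_1$), with $a' = f(v_{m+1})$ and $b'$ chosen so that the labels $f(v_{m+1}), f(v_{m+2}), \ldots$ match one of the two alternating patterns of Definition~\ref{transfer_context}. A short parity case analysis shows that dropping a prefix of an alternating sequence still yields an alternating sequence (possibly switching which of the two sign patterns applies), and that in every case $a' + b' = f(v_m) + f(v_{m+1}) = c' + d'$, so the sum condition is automatic. To this new context I apply, for part~(a), attainability of $n_1', \ldots, n_{m'}'$, and for part~(b), nice attainability of $n_1', \ldots, n_{m'}'$ with extra element $n_{m+m'+1}'$, yielding a well-behaved sequence $S_2$. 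Concatenating $S_1$ and $S_2$ gives a candidate sequence in the original combined context, and I would confirm that it is well-behaved: all transfers are type-1; the leaf-subset condition survives the splice because the first step of $S_2$ moves a subset of $\{c', \ldots, d'\}$, which is exactly the set moved in the last step of $S_1$; and any transfer $v_j' \rightarrow v_k'$ of $S_2$ (with $j, k$ of different parity in $S_2$'s local indexing) translates to the full-context transfer $v_{m+j} \rightarrow v_{m+k}$, so parities remain distinct. Reading off the final leaf counts then produces the concatenated sequence as the result, and in part~(b) the last transfer is indeed $v_{m+m'} \rightarrow v_{m+m'+1}$ with $v_{m+m'+1}$ appearing only there.

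The main obstacle I anticipate is the verification in the middle paragraph: confirming all three clauses of Definition~\ref{transfer_context} for the new context at $v_{m+1}$. Both the alternating-label clause and the sum clause are shallow, but each requires a careful case analysis on the parity of $m$ and on which of the two permitted sign patterns the original context uses. The splice will not work unless the label pattern continues correctly and $a' + b' = c' + d'$ falls out automatically from the validity of the last type-1 transfer of $S_1$; once these bookkeeping points are checked, the rest of the argument is routine.
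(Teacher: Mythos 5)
Your proposal is correct and follows essentially the same route as the paper, whose entire proof is the single sentence ``Concatenate the two corresponding sequences of transfers''; you have simply supplied the bookkeeping (the truncated context at $v_{m+1}$, the identity $a'+b'=f(v_m)+f(v_{m+1})=c'+d'$, and the verification that the splice preserves well-behavedness) that the paper leaves implicit.
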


\begin{proof}
Concatenate the two corresponding sequences of transfers.
\end{proof}

\subsection{New Sequences of Transfers}

For the standard sequence of transfers $0\rightarrow n\rightarrow 1\rightarrow\cdots$, we can leave behind any odd number of leaves at each step (see Fig.~\ref{fig1}), essentially because the sum of adjacent labels changes (increases or decreases) by one at each step.  In general, if the sum changes by $k$, then we can leave behind $l$ leaves, where $l$ is any integer with $l\ge k$ and $l\equiv k\pmod{2}$.  The following lemma makes this idea precise:

\begin{lemma} \label{transfer_parity}
Let $(T, f, v_{1}, \ldots , v_{m}, a, b, c, d)$ be a transfer context.  Consider performing a well-behaved sequence of transfers, beginning with
$$v_{i_{1}}\rightarrow\cdots\rightarrow v_{i_{j}},\qquad i_{1} = 1, \qquad j > 1.$$
Let $v_{i_{j + 1}}$ be a vertex with $i_{j + 1}\not\equiv i_{j}\pmod{2}$, and let $l$ be an integer such that the previous transfer $v_{i_{j - 1}}\rightarrow v_{i_{j}}$ transfers at least $l$ leaves, and such that $l\ge |i_{j + 1} - i_{j - 1}|/2$ and $l\equiv |i_{j + 1} - i_{j - 1}|/2\pmod{2}$.  Then there exists a transfer $v_{i_{j}}\rightarrow v_{i_{j + 1}}$ of the first type, leaving behind $l$ leaves at $v_{i_{j}}$.
\end{lemma}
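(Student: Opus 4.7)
The plan is to verify the lemma by a direct computation with the label equations that govern type-1 transfers.

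First, I would unpack the alternating structure of the labels guaranteed by Definition~\ref{transfer_context}. In either of the two permissible patterns, the labels of the odd-indexed $v_i$ form an arithmetic progression with common difference $\pm 1$, while the labels of the even-indexed $v_i$ form one with the opposite common difference. Since $i_{j-1}$ and $i_{j+1}$ are both of opposite parity to $i_j$, they share a parity, so
$$\bigl|f(v_{i_{j+1}}) - f(v_{i_{j-1}})\bigr| = \tfrac{1}{2}|i_{j+1} - i_{j-1}|.$$
Write $s := f(v_{i_{j+1}}) - f(v_{i_{j-1}})$, so $|s| = |i_{j+1} - i_{j-1}|/2$; the sign of $s$ depends on the pattern and on whether $i_{j+1} > i_{j-1}$.

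Next, since the previous transfer $v_{i_{j-1}} \to v_{i_j}$ is a type-1 transfer of some consecutive block of leaves $k, k+1, \ldots, k+p$ with $p + 1 \geq l$, the type-1 condition gives $f(v_{i_{j-1}}) + f(v_{i_j}) = 2k + p$. Adding $s$ to both sides yields
$$f(v_{i_j}) + f(v_{i_{j+1}}) = 2k + p + s.$$

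To construct the desired transfer, I would set $p' := p - l$ and $k' := k + (l+s)/2$, and take the new transfer to move the leaves $k', k'+1, \ldots, k'+p'$ from $v_{i_j}$ to $v_{i_{j+1}}$. The parity hypothesis $l \equiv |s| \pmod 2$ (equivalently, $l+s$ even) makes $k'$ an integer, and the inequality $l \geq |s|$ yields both $k' \geq k$ (since $l + s \geq 0$) and $k' + p' \leq k + p$ (since $(k'+p') - (k+p) = (s-l)/2 \leq 0$), so $\{k', \ldots, k'+p'\}$ is a subset of the block currently at $v_{i_j}$. Finally, $2k' + p' = 2k + (l+s) + (p - l) = 2k + p + s = f(v_{i_j}) + f(v_{i_{j+1}})$, certifying this is a legitimate type-1 transfer; it leaves $(p+1) - (p'+1) = l$ of the original leaves behind at $v_{i_j}$, as required.

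The computation is quite direct; the only subtle point is tracking the sign of $s$ across the two permissible label patterns and the two possible orderings of $i_{j-1}, i_{j+1}$, and this is absorbed uniformly by the hypothesis $l \geq |s|$. I do not anticipate any significant obstacles beyond this bookkeeping.
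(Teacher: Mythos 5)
Your proof is correct and takes essentially the same approach as the paper's: both derive $f(v_{i_j}) + f(v_{i_{j+1}}) = (2k+p) + s$ with $s = f(v_{i_{j+1}}) - f(v_{i_{j-1}}) = \pm|i_{j+1}-i_{j-1}|/2$ from the transfer-context label pattern, and then trim the consecutive block of leaves to leave exactly $l$ behind. Your explicit formulas for $k'$ and $p'$ are just a more algebraic rendering of the paper's description of removing elements from each end of the block.
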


\begin{proof}
Since $(T, f, v_{1}, \ldots , v_{m}, a, b, c, d)$ is a transfer context, we have $|f(v_{i_{j + 1}}) - f(v_{i_{j - 1}})| = |i_{j + 1} - i_{j - 1}|/2$. Suppose the previous transfer $v_{i_{j - 1}}\rightarrow v_{i_{j}}$ transfers leaves with consecutive labels $c', \ldots , d'$, so that $f(v_{i_{j - 1}}) + f(v_{i_{j}}) = c' + d'$.  Then
$$f(v_{i_{j}}) + f(v_{i_{j + 1}}) = c' + d' \pm |i_{j + 1} - i_{j - 1}|/2$$
Therefore, a transfer $v_{i_{j}}\rightarrow v_{i_{j + 1}}$ of the first type transfers at most the leaves with labels (depending on the sign in the expression above)
$$\{c' + |i_{j + 1} - i_{j - 1}|/2, \ldots , d'\} \qquad\text{or}\qquad\{c', \ldots , d' - |i_{j + 1} - i_{j - 1}|/2\}.$$
Therefore, we can leave behind $|i_{j + 1} - i_{j - 1}|/2$ leaves at $v_{i_{j}}$.  By removing $k$ elements from each end of the relevant set above, we can more generally leave behind $l = |i_{j + 1} - i_{j - 1}|/2 + 2k$ leaves for any $k\ge 0$, as long as there are at least $l$ leaves at $v_{i_{j}}$ to leave behind.
\end{proof}

Taking $i_{0} = 0$ (see end of Df.~\ref{transfer_context}) gives the analogous result for $j = 1$:

\begin{lemma}
Let $(T, f, v_{1}, \ldots , v_{m}, a, b, c, d)$ be a transfer context.  Let $v_{i_{2}}$ be a vertex with $i_{2}$ even, and let $l$ be an integer such that $\{c, \ldots , d\}$ is a set of at least $l$ leaves, and such that $l\ge |i_{2}|/2$ and $l\equiv |i_{2}|/2\pmod{2}$.  Then there exists a transfer $v_{1}\rightarrow v_{i_{2}}$ of the first type, leaving behind $l$ leaves at $v_{1}$.
\end{lemma}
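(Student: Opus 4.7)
The plan is to reduce this to Lemma~\ref{transfer_parity}, exactly as the phrasing ``Taking $i_{0} = 0$'' suggests. The idea is to introduce a phantom vertex $v_{0}$ with label $b$ and a phantom transfer $v_{0}\rightarrow v_{1}$ that delivers precisely the leaves $\{c,\ldots,d\}$ to $v_{1}$. Because the transfer context satisfies $a + b = c + d$, we have $f(v_{0}) + f(v_{1}) = b + a = c + d$, so this phantom transfer has exactly the form of a legitimate type-1 transfer. After introducing this bookkeeping, every quantity appearing in the hypothesis of Lemma~\ref{transfer_parity} has a counterpart here.

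Concretely, I would first verify the key label identity $|f(v_{i_{2}}) - f(v_{0})| = |i_{2}|/2$. From Df.~\ref{transfer_context}, the labels of $v_{1},v_{2},\ldots$ form one of the two alternating sequences; extending these by setting $f(v_{0}) = b$, one checks directly that the even-indexed vertex $v_{2k}$ has label $b \mp k$, so that $|f(v_{2k}) - f(v_{0})| = k = |i_{2}|/2$ when $i_{2} = 2k$. This is the analogue of the identity $|f(v_{i_{j+1}}) - f(v_{i_{j-1}})| = |i_{j+1} - i_{j-1}|/2$ used in the proof of Lemma~\ref{transfer_parity}.

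Next I would simply repeat the computation from that proof: the transfer $v_{1}\rightarrow v_{i_{2}}$ of the first type satisfies
\[
f(v_{1}) + f(v_{i_{2}}) = c + d \pm |i_{2}|/2,
\]
so it can transfer at most the leaves with labels $\{c + |i_{2}|/2,\ldots,d\}$ or $\{c,\ldots,d - |i_{2}|/2\}$, depending on the sign. Leaving behind $|i_{2}|/2$ leaves at $v_{1}$ is therefore possible, and by trimming $k$ leaves from each end of the transferred interval we can instead leave behind $|i_{2}|/2 + 2k$ leaves, for any $k \ge 0$, provided $\{c,\ldots,d\}$ contains at least that many leaves. This gives every $l$ with $l \ge |i_{2}|/2$, $l \equiv |i_{2}|/2 \pmod{2}$, and $l \le |\{c,\ldots,d\}|$, which is exactly the range permitted by hypothesis.

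There is no real obstacle beyond checking that the phantom-vertex substitution is legitimate; the remainder is a transcription of the previous proof with $i_{j-1}, i_{j}, i_{j+1}$ replaced by $0, 1, i_{2}$. The only point where one should take care is confirming that the leaves $\{c,\ldots,d\}$ are a valid substitute for ``the leaves the previous transfer delivered,'' which is guaranteed by the second and third bullets of Df.~\ref{transfer_context}.
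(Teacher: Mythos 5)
Your proposal is correct and is precisely the argument the paper intends: its own proof is just the one-line remark ``Analogous to proof of Lemma~\ref{transfer_parity},'' relying on the same phantom-vertex device ($i_{0}=0$, $f(v_{0})=b$) that you spell out, with the identity $a+b=c+d$ from Df.~\ref{transfer_context} playing exactly the role you assign it. Your write-up simply makes the substitution $i_{j-1},i_{j},i_{j+1}\mapsto 0,1,i_{2}$ explicit, which matches the paper's approach.
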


\begin{proof}
Analogous to proof of Lemma~\ref{transfer_parity}.
\end{proof}

By these lemmas, we can establish the following attainable and nicely attainable sequences, where $o$, $e$, and $e/0$ represent any positive odd integer, positive even integer, and non-negative even integer, respectively:

\begin{center} \label{nicely_attainable_table}
\begin{tabular}{p{3.7cm}|p{9.1cm}}
\emph{Nicely attain.\ seq.} & \emph{Corresponding well-behaved sequence of transfers}\\
\hline
$o$ & $v_{1}\rightarrow v_{2}$\\
\hline
$e, o, o, o, e$ & $v_{1}\rightarrow v_{4}\rightarrow v_{3}\rightarrow v_{2}\rightarrow v_{5}\rightarrow v_{6}$\\
\hline
$e, o, e, e, o, e$ & $v_{1}\rightarrow v_{4}\rightarrow v_{5}\rightarrow v_{2}\rightarrow v_{3}\rightarrow v_{6}\rightarrow v_{7}$\\
\hline
\parbox{3.7cm}{$e, e/0, \underbrace{o, \ldots , o}_{\clap{\parbox{2.7 cm}{\centering\footnotesize non-neg.\ even \#}}}, e/0, e$} & \parbox{9.1 cm}{$v_{1}\rightarrow [v_{2}\rightarrow v_{1}]\rightarrow [v_{4}\rightarrow v_{3}]\rightarrow\cdots$\\
$\rightarrow [v_{2k}\rightarrow v_{2k - 1}]\rightarrow v_{2k}\rightarrow v_{2k + 1}$}
\end{tabular}
\end{center}

\begin{center} \label{attainable_table}
\begin{tabular}{p{3.7cm}|p{9.1cm}} 
\emph{Attainable sequence} & \emph{Corresponding well-behaved sequence of transfers}\\
\hline
$e, \ldots , e$ & $v_{1}\rightarrow v_{2}\rightarrow\cdots\rightarrow v_{k - 1}\rightarrow v_{k}\rightarrow v_{k - 1}\rightarrow\cdots\rightarrow v_{1}$\\
\hline
$e, e/0, e, o$ & $v_{1}\rightarrow v_{2}\rightarrow v_{1}\rightarrow v_{4}\rightarrow v_{3}$\\
\hline
\parbox{3.7cm}{$e, e/0, \underbrace{o, \ldots , o}_{\clap{\parbox{2.7 cm}{\centering\footnotesize non-neg.\ even \#}}}$} & \parbox{9.1 cm}{$v_{1}\rightarrow [v_{2}\rightarrow v_{1}]\rightarrow [v_{4}\rightarrow v_{3}]\rightarrow\cdots$\\
$\rightarrow [v_{2k}\rightarrow v_{2k - 1}]$}
\end{tabular}
\end{center}

Each nicely attainable sequence above is also an attainable sequence, by removing the last transfer.  Also, note that the backwards double-8 transfer of Hrn\v{c}iar \& Haviar~\cite{diameter5} corresponds to the nicely attainable sequence $e, e, e, e$; we leave it out because $e, e, e, e$ is a special case of the last nicely attainable sequence above.

\subsection{Order of Leaves}

For the standard transfers $0\rightarrow n\rightarrow 1\rightarrow\cdots$, leaves are left behind in the same order as vertices appear in the sequence of transfers.  The following lemma makes this idea precise, and extends it to certain other transfers.  Note that we allow the list $v_{1}, \ldots , v_{m}$ to be long enough to include some of $c, \ldots , d$.

\begin{lemma} \label{order}
Let $(T, f, v_{1}, \ldots , v_{m}, a, b, c, d)$ be a transfer context, where the list $v_{1}, \ldots , v_{m}$ includes at least $l$ of the leaves $c, \ldots , d$, and let $v_{0}$ be a vertex with $f(v_{0}) = b$.  If a type-1 transfer $v_{1}\rightarrow v_{0}$ or $v_{1}\rightarrow v_{2}$ leaves behind $l$ of the leaves $c, \ldots , d$, then it leaves behind the $l$ leaves of $c, \ldots , d$ that occur first in the list $v_{1}, \ldots , v_{m}$.
\end{lemma}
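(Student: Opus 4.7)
My plan is as follows. The lemma reduces to a direct verification: one describes both (i) the order in which the labels in $\{c, c+1, \ldots, d\}$ appear in the list $v_1, \ldots, v_m$, and (ii) which labels of $\{c, \ldots, d\}$ remain as leaves at $v_1$ after the transfer, and then checks that the first $l$ of (i) coincides with the set (ii).

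For (i), I would unwind the transfer-context definition. The labels of $v_1, v_2, v_3, \ldots$ are either $a, b-1, a+1, b-2, \ldots$ or $a, b+1, a-1, b+2, \ldots$; in either case the odd-indexed labels form an arithmetic progression based at $a$ and the even-indexed labels form one based at $b\mp 1$. Since $a+b = c+d$, the first index at which the $a$-side reaches $c$ and the first index at which the $b$-side reaches $d$ differ by exactly one, so a short index computation shows the labels of $\{c, \ldots, d\}$ appear in the list in the interleaved order $d, c, d-1, c+1, d-2, c+2, \ldots$ in the first case, or $c, d, c+1, d-1, c+2, d-2, \ldots$ in the second case.

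For (ii), I would apply the type-1 transfer condition. For $v_1 \to v_0$ with $f(v_0) = b$, the transferred range $[k, k+m] \subseteq [c, d]$ must satisfy $k + (k+m) = c+d$, forcing it to have the form $[c+j, d-j]$ and leaving behind the symmetric union $\{c, \ldots, c+j-1\} \cup \{d-j+1, \ldots, d\}$ of size $l = 2j$. For $v_1 \to v_2$ the sum becomes $c+d \mp 1$, so the transferred range is slightly off-symmetric, and the leaves left behind form a near-balanced union of an initial and a terminal segment of $\{c, \ldots, d\}$, of total size $l = 2j+1$ or $l = 2j-1$ depending on the sign.

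Finally, comparing the sets from (i) and (ii) is a direct match in each of the four subcases (two transfer-context labelings times the two targets $v_0, v_2$). The main bookkeeping point is that the orientation of the off-symmetry in (ii), namely whether one more leaf remains on the $c$-end or on the $d$-end, must agree with the orientation of the list order in (i), which starts with either $d$ or $c$; this falls out automatically because both orientations are controlled by the same choice of sign in the transfer-context definition. I do not anticipate a real obstacle, since each subcase reduces to writing down two explicitly described symmetric-or-near-symmetric subsets of $\{c, \ldots, d\}$ and observing they coincide.
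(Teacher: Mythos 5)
Your proposal is correct and is essentially the paper's own argument: both unwind the arithmetic-progression structure of the labels $a, b\mp 1, a\pm 1, \ldots$ together with the type-1 sum condition $f(v_1)+f(\cdot)=c+d$ or $c+d\mp 1$, the only presentational difference being that you compute the full interleaved order $d,c,d-1,c+1,\ldots$ explicitly while the paper just observes that the endpoints $c',d'$ of the transferred interval occupy consecutive positions in the list (and handles the second labeling case via the symmetry $g(v)=n-f(v)$ rather than directly). Your key observation that the off-symmetry of the left-behind set and the starting end of the interleaved order are governed by the same sign is exactly the point that makes the four subcases match.
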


\begin{proof}
Suppose $a < b$.  Since $v_{1}, \ldots , v_{m}$ includes some of $c, \ldots , d$, the vertices $v_{1}, \ldots , v_{m}$ must have labels $a, b - 1, a + 1, b - 2, \ldots$.  Suppose the leaves transferred by the transfer $v_{1}\rightarrow v_{0}$ or $v_{1}\rightarrow v_{2}$ have labels $c', \ldots , d'$.
\begin{itemize}
\item If we transfer $v_{1}\rightarrow v_{0}$, then $c' + d' = a + b$, so $c' - a = b - d'$.
\item If we transfer $v_{1}\rightarrow v_{2}$, then $c' + d' = a + b - 1$, so $c' - a + 1 = b - d'$.
\end{itemize}
Therefore, $c', d'$ occur consecutively in $v_{1}, \ldots , v_{m}$, so the leaves left behind are exactly the leaves that occur before $c', d'$ in the list $v_{1}, \ldots , v_{m}$.

If $a > b$, consider the labeling $g(v) = n - f(v)$, and apply the above.
\end{proof}

\noindent This lemma verifies the standard breadth-first construction (see Fig.~\ref{fig1}).


\section{Main Results}

We now prove our main result, Thm.~\ref{main_theorem} (see p.~\pageref{main_theorem}).  We can easily obtain any of these trees by transfers, up to the last level (see Fig.~\ref{fig1}).  The key idea is to rearrange subtrees to make the transfers at the last level possible.

\subsection{Proof of Thm.~\ref{main_theorem}(a): A Class of Diameter-6 Complete Trees}

\begin{proof}

Let $v$ have children $v_{1}, \ldots, v_{k}$, let each $v_{i}$ have children $v_{i, 1}, \ldots , v_{i, m_{k}}$, and let each $v_{i, j}$ have $n_{i, j}$ children.  Then we can represent $T$ as
$$((n_{1, 1}, \ldots , n_{1, m_{1}}), \ldots , (n_{k, 1}, \ldots , n_{k, m_{k}})).$$
By defining the $v_{i, j}$ differently, we can permute each $m_{i}$-tuple and also the list of $m_{i}$-tuples, giving a different sequence $n_{1, 1}, \ldots , n_{k, m_{k}}$; we say that the list of $m_{i}$-tuples \emph{corresponds} to any such sequence of integers.  It suffices to prove that each possible list of $m_{i}$-tuples corresponds to an attainable sequence.

We denote by $a/b$ any permutation of the $b$-tuple $(\overbrace{e, \ldots, e}^{\text{$a$ $e$'s}}, \overbrace{o, \ldots , o}^{\text{$b - a$ $o$'s}})$.  By the conditions, each $b$ is odd. 

\bigskip

\noindent\emph{Claim:} If each $m_{i}$-tuple is one of $0/1$, $1/3$, $2/3$, $3/5$, then the list of $m_{i}$-tuples corresponds to some sequence of integers of the form $n_{1}, \ldots , n_{m}, e, \ldots , e$, where $n_{1}, \ldots , n_{m}$ is nicely attainable.

\bigskip

\noindent\emph{Proof of Claim.}  We call a list of $m_{i}$-tuples \emph{nicely attainable} if it corresponds to some nicely attainable sequence of integers.  We list all minimal nicely attainable lists of $m_{i}$-tuples $0/1$, $1/3$, $2/3$, $3/5$ below, where brackets indicate nicely attainable subsequences (see Lemma~\ref{combine_attainable}, tables on p.~\pageref{nicely_attainable_table}):

\begin{center}
\begin{tabular}{|p{3.4 cm}|p{9.0 cm}|}
\hline
\multicolumn{2}{|c|}{Minimal nicely attainable lists of $m_{i}$-tuples}\\
\hline
$(0/1)$ & $((o))$\\
\hline
$(1/3, 1/3)$ & $((\underbrace{e, o, o), (o, e}, o))$\\
\hline
$(1/3, 3/5)$ & $((o, o, \underbrace{e), (e, e, e}, o, o))$\\
\hline
$(2/3, 2/3)$ & $((o, \underbrace{e, e), (e, e}, o))$\\
\hline
$(2/3, 3/5, 3/5)$ & $((o, \underbrace{e, e), (e, e}, o, o, \underbrace{e), (e, e, e}, o, o))$\\
\hline
$(3/5, 3/5, 3/5, 3/5)$ & $((o, o, \underbrace{e, e, e), (e}, o, o, \underbrace{e, e), (e, e}, o, o, \underbrace{e), (e, e, e}, o, o))$\\
\hline
\end{tabular}
\end{center}

We call a list of $m_{i}$-tuples $0/1$, $1/3$, $2/3$, $3/5$ \emph{irreducible} if no subset is nicely attainable.  We list all irreducible lists of $m_{i}$-tuples below; the right column shows that each corresponds to a sequence of the desired form:

\begin{center}
\begin{tabular}{|p{3.4 cm}|p{9.0 cm}|}
\hline
\multicolumn{2}{|c|}{Irreducible lists of $m_{i}$-tuples}\\
\hline
$\varnothing$ & $\varnothing$\\
\hline
$(1/3)$ & $((o, o, e))$\\
\hline
$(2/3)$ & $((o, e, e))$\\
\hline
$(3/5)$ & $((o, o, e, e, e))$\\
\hline
$(1/3, 2/3)$ & $((\underbrace{e, o, o), (o, e}, e))$\\
\hline
$(2/3, 3/5)$ & $((o, \underbrace{e, e), (e, e}, o, o, e))$\\
\hline
$(3/5, 3/5)$ & $((o, o, \underbrace{e, e, e), (e}, o, o, e, e))$\\
\hline
$(3/5, 3/5, 3/5)$ & $((o, o, \underbrace{e, e, e), (e}, o, o, \underbrace{e, e), (e, e}, o, o, e))$\\
\hline
\end{tabular}
\end{center}
\bigskip

By the definition of an irreducible list of $m_{i}$-tuples, we can permute any list of $m_{i}$-tuples $0/1$, $1/3$, $2/3$, $3/5$ into a list of the following form:
$$\underbrace{a_{1}/b_{1}, \ldots , a_{i_{1}}/b_{i_{1}}}_{\text{nicely attainable}}, \ldots , \underbrace{a_{i_{k - 1} + 1}/b_{i_{k - 1} + 1}, \ldots , a_{i_{k}}/b_{i_{k}}}_{\text{nicely attainable}}, \underbrace{a_{i_{k} + 1}/b_{i_{k} + 1}, \ldots , a_{i_{k + 1}}/b_{i_{k + 1}}}_{\text{irreducible}}$$
Therefore, any such list corresponds to a sequence of the desired form.

\bigskip

\noindent\emph{Claim:} If each $m_{i}$-tuple is one of $0/1$, $1/3$, $2/3$, $3/5$, $1/1$, then the list of $m_{i}$-tuples corresponds to some attainable sequence of integers.

\bigskip

\noindent\emph{Proof of Claim.} Permute the $m_{i}$-tuples so that all $1/1$'s are at the end.

\bigskip

We now prove the desired result.  We associate each general $a/b$ with one of $0/1$, $1/3$, $2/3$, $3/5$, $1/1$, giving five classes of $m_{i}$-tuples $a/b$:
\begin{center}
\begin{tabular}{l|c}
\multicolumn{1}{c|}{\emph{If...}} & \emph{Then associate $a/b$ with...}\\
\hline
$a < b$, $a\equiv 0\pmod{4}$, & $0/1$.\\
\hline
$a < b$, $a\equiv 1\pmod{4}$, & $1/3$.\\
\hline
$a < b$, $a\equiv 2\pmod{4}$, & $2/3$.\\
\hline
$a < b$, $a\equiv 3\pmod{4}$, & $3/5$.\\
\hline
$a = b$, & $1/1$.
\end{tabular}
\end{center}

Consider repeating the proofs of the claims above, but replacing each of $0/1$, $1/3$, $2/3$, $3/5$, $1/1$ with a general $a/b$ in its class.  To show that the proofs still hold, it suffices to prove the following three statements:

\begin{enumerate}[(1)]
\item The nicely attainable lists of $m_{i}$-tuples are still nicely attainable.
\item The irreducible lists of $m_{i}$-tuples still correspond to sequences $n_{1}, \ldots , n_{m}, e, \ldots , e$, where $n_{1}, \ldots , n_{m}$ is nicely attainable.
\item The lists associated with $1/1$ correspond to $e, \ldots , e$.
\end{enumerate}

(3) is clear. Consider (1) and (2), which only involve $m_{i}$-tuples associated with $0/1$, $1/3$, $2/3$, $3/5$.  We can obtain these general $m_{i}$-tuples from $0/1$, $1/3$, $2/3$, $3/5$ by repeatedly inserting $o$ or $e, e, e, e$.  Consider the following modified versions of the tables above:

\begin{center}
\begin{tabular}{|p{3.4 cm}|p{9.0 cm}|}
\hline
\multicolumn{2}{|c|}{Minimal nicely attainable lists of $m_{i}$-tuples}\\
\hline
$(0/1)$ & $((\ldots , o))$\\
\hline
$(1/3, 1/3)$ & $((\ldots, \underbrace{e, o, o), (o, e},\ldots , o))$\\
\hline
$(1/3, 3/5)$ & $((\ldots, o, o, \underbrace{e), (e, e, e}, \ldots, o, o))$\\
\hline
$(2/3, 2/3)$ & $((\ldots , o, \underbrace{e, e), (e, e},\ldots , o))$\\
\hline
$(2/3, 3/5, 3/5)$ & $((\ldots , o, \underbrace{e, e), (e, e},\ldots , o, o, \underbrace{e), (e, e, e}, \ldots, o, o))$\\
\hline
$(3/5, 3/5, 3/5, 3/5)$ &
\hangindent=3.8cm
$((\ldots , o, o, \underbrace{e, e, e), (e}, \ldots , o, o, \underbrace{e, e), (e, e},\hfill$
$\ldots , o, o, \underbrace{e), (e, e, e}, \ldots , o, o))$\\
\hline
\end{tabular}
\end{center}

\begin{center}
\begin{tabular}{|p{3.4 cm}|p{9.0 cm}|}
\hline
\multicolumn{2}{|c|}{Irreducible lists of $m_{i}$-tuples}\\
\hline
$\varnothing$ & $((\ldots))$\\
\hline
$(1/3)$ & $((\ldots , o, o, e))$\\
\hline
$(2/3)$ & $((\ldots , o, e, e))$\\
\hline
$(3/5)$ & $((\ldots , o, o, e, e, e))$\\
\hline
$(1/3, 2/3)$ & $((\ldots , \underbrace{e, o, o), (o, e},\ldots , e))$\\
\hline
$(2/3, 3/5)$ & $((\ldots , o, \underbrace{e, e), (e, e}, \ldots , o, o, e))$\\
\hline
$(3/5, 3/5)$ & $((\ldots , o, o, \underbrace{e, e, e), (e}, \ldots , o, o, e, e))$\\
\hline
$(3/5, 3/5, 3/5)$ & $((\ldots , o, o, \underbrace{e, e, e), (e}, \ldots , o, o, \underbrace{e, e), (e, e}, \ldots , o, o, e))$\\
\hline
\end{tabular}
\end{center}

\bigskip

By inserting each $o$ or $e, e, e, e$ at the dots, (1) and (2) also hold.
\end{proof}

\subsection{Proof of Thm.~\ref{main_theorem}(b): A Class of Diameter-6 Non-Complete Trees}

\begin{proof}
We adapt the proof above.  We again represent trees by lists of $m_{i}$-tuples, denoting by $a/b$ any permutation of the $b$-tuple of the form
\begin{align*}
(e/0, \overbrace{e, \ldots, e}^{\text{$a - 1$ $e$'s}}, \overbrace{o, \ldots , o}^{\text{$b - a$ $o$'s}})\qquad & \text{if $a > 1$}\\
(e, \overbrace{o, \ldots , o}^{\text{$b - 1$ $o$'s}})\qquad & \text{if $a = 1$}\\
(\overbrace{o, \ldots , o}^{\text{$b$ $o$'s}})\qquad & \text{if $a = 0$}
\end{align*}

Since our $m_{i}$-tuples now contain $e/0$'s, we no longer put all $1/1$'s at the end, and we classify our $m_{i}$-tuples differently.  We now associate each general $a/b$ with one of $0/1$, $1/3$, $2/3$, $3/3$, $1/1$:
\begin{center}
\begin{tabular}{l|c}
\multicolumn{1}{c|}{\emph{If...}} & \emph{Then associate $a/b$ with...}\\
\hline
$a\equiv 0\pmod{4}$, & $0/1$.\\
\hline
$a\equiv 1\pmod{4}$, $a < b$, & $1/3$.\\
\hline
$a\equiv 1\pmod{4}$, $a = b$, & $1/1$.\\
\hline
$a\equiv 2\pmod{4}$, & $2/3$.\\
\hline
$a\equiv 3\pmod{4}$, & $3/3$.
\end{tabular}
\end{center}

We list our minimal nicely attainable lists of $m_{i}$-tuples below:

\begin{center}
\begin{tabular}{|p{3.4 cm}|p{9.0 cm}|}
\hline
\multicolumn{2}{|c|}{Minimal nicely attainable lists of $m_{i}$-tuples}\\
\hline
$(0/1)$ & $((\ldots , o))$\\
\hline
$(1/3, 1/3)$ & $((\ldots, \underbrace{e, o, o), (o, e},\ldots , o))$\\
\hline
$(1/3, 3/3)$ & $((\ldots, o, o, \underbrace{e), (e/0, e, e}, \ldots))$\\
\hline
$(2/3, 2/3)$ & $((\ldots , o, \underbrace{e, e/0), (e/0, e},\ldots , o))$\\
\hline
$(3/3, 1/1)$ & $((\ldots, \underbrace{e, e/0, e), (e}, \ldots))$\\
\hline
$(1/3, 2/3, 1/1)$& $((\ldots, o, o, \underbrace{e), (\ldots , e), (e/0, e}, \ldots , o))$\\
\hline
$(2/3, 3/3, 3/3)$ & $((\ldots , o, \underbrace{e, e/0), (e/0, e}, \ldots , \underbrace{e), (e/0, e, e}, \ldots))$\\
\hline
$(2/3, 1/1, 1/1)$ & $((\ldots , o, \underbrace{e, e/0), (\ldots , e), (\ldots, e}))$\\
\hline
$(3/3, 3/3, 3/3, 3/3)$ &
\hangindent=4.9cm
$((\ldots, \underbrace{e, e/0, e), (e}, \ldots , \underbrace{e, e/0), (e/0, e},\hfill$
$\ldots , \underbrace{e), (e/0, e, e}, \ldots))$\\
\hline
$(1/3, 1/1, 1/1, 1/1)$ & $((\ldots , o, o, \underbrace{e), (\ldots , e), (\ldots , e), (\ldots , e}))$\\
\hline
$(1/1, 1/1, 1/1, 1/1)$ & $((\ldots , \underbrace{e), (\ldots , e), (\ldots , e), (\ldots, e}))$\\
\hline
\end{tabular}
\end{center}

As before, we call a list of $m_{i}$-tuples \emph{irreducible} if no subset is nicely attainable.  We list all irreducible lists of $m_{i}$-tuples below; the right column shows that each corresponds to a sequence of the desired form:

\begin{center}
\begin{tabular}{|p{3.4 cm}|p{9.0 cm}|}
\hline
\multicolumn{2}{|c|}{Irreducible lists of $m_{i}$-tuples}\\
\hline
$\varnothing$ & $((\ldots))$\\
\hline
$(1/3)$ & $((\ldots , o, o, e))$\\
\hline
$(2/3)$ & $((\ldots , o, e, e/0))$\\
\hline
$(3/3)$ & $((\ldots , e, e, e/0))$\\
\hline
$(1/1)$ & $((\ldots, e))$\\
\hline
$(1/3, 2/3)$ & $((\ldots , \underbrace{e, o, o), (o, e},\ldots , e/0))$\\
\hline
$(1/3, 1/1)$ & $((\ldots , o, o, e), (\ldots , e))$\\
\hline
$(2/3, 3/3)$ & $((\ldots , o, \underbrace{e, e/0), (e/0, e}, \ldots , e))$\\
\hline
$(2/3, 1/1)$ & $((e), (e/0, e, o))$\hfill(see tables on p.~\pageref{attainable_table})\\
\hline
$(3/3, 3/3)$ & $((\ldots , \underbrace{e, e/0, e), (e}, \ldots , e, e/0))$\\
\hline
$(1/1, 1/1)$ & $((\ldots, e), (\ldots , e))$\\
\hline
$(1/3, 1/1, 1/1)$ & $((\ldots , o, o, e), (\ldots , e), (\ldots , e))$\\
\hline
$(3/3, 3/3, 3/3)$ & $((\ldots , \underbrace{e, e/0, e), (e}, \ldots , \underbrace{e, e/0), (e/0, e}, \ldots , e))$\\
\hline
$(1/1, 1/1, 1/1)$ & $((\ldots , e), (\ldots, e), (\ldots , e))$\\
\hline
\end{tabular}
\end{center}
\bigskip

Therefore, any list of $m_{i}$-tuples $0/1$, $1/3$, $2/3$, $3/3$, $1/1$ corresponds to some attainable sequence of integers.  Consider replacing each of $0/1$, $1/3$, $2/3$, $3/3$, $1/1$ with a general $a/b$ in its class.  It suffices to prove the following:
\begin{enumerate}[(1)]
\item The nicely attainable lists of $m_{i}$-tuples are still nicely attainable.
\item The irreducible lists of $m_{i}$-tuples still correspond to attainable sequences.
\end{enumerate}

We can obtain these general $m_{i}$-tuples from $0/1$, $1/3$, $2/3$, $3/3$, $1/1$ by repeatedly inserting $o$ or a permutation of $e/0, e, e, e$.  For each list of $m_{i}$-tuples other than $(2/3, 1/1)$, it is clear that we can insert $o$ at the dots, but for inserting a permutation of $e/0, e, e, e$ at the dots we have three cases:

\bigskip

\noindent\emph{Case 1: Dots within a nicely attainable string $\underbrace{e, e/0, e/0, e}$.}

\noindent\emph{Proof of Case 1.}  Inserting permutations of $e/0, e, e, e$ produces a string of $e$'s and $e/0$'s with length a multiple of four.  We claim that we can choose the permutations so that the resulting string is of the form
$$\underbrace{e, e/0, e/0, e}, \ldots , \underbrace{e, e/0, e/0, e}$$
Since two of every four consecutive terms of this string are $e/0$'s, our claim is correct, and we can choose the permutations appropriately.

\bigskip

\noindent\emph{Case 2: Dots within a sequence of trailing $e$'s.}

\bigskip

\noindent\emph{Proof of Case 2.}  Inserting permutations of $e/0, e, e, e$ produces a longer string of trailing $e$'s.  We claim that we can choose the permutations so that the resulting string is of one of the following forms:
\begin{align*}
\underbrace{e, e/0, e/0, e}, \ldots , \underbrace{e, e/0, e/0, e}&\\
\underbrace{e, e/0, e/0, e}, \ldots , \underbrace{e, e/0, e/0, e}&, e/0\\
\underbrace{e, e/0, e/0, e}, \ldots , \underbrace{e, e/0, e/0, e}&, e, e/0\\
\underbrace{e, e/0, e/0, e}, \ldots , \underbrace{e, e/0, e/0, e}&, e, e, e/0
\end{align*}
Since at least one of every four consecutive terms of these strings are $e/0$'s, our claim is correct, and we can choose the permutations appropriately.

\bigskip

\noindent\emph{Case 3: All other dots.}

\bigskip

\noindent\emph{Proof of Case 3.}  We can insert $e, e/0, e, e$, which is nicely attainable.

\bigskip

Finally, consider lists of $m_{i}$-tuples associated with $(2/3, 1/1)$.  Each such list of $m_{i}$-tuples other than $(2/3, 1/1)$ itself can be obtained from one of the following by repeatedly inserting $o$ or a permutation of $e/0, e, e, e$:
\begin{center}
\begin{tabular}{|p{3.4 cm}|p{9.0 cm}|}
\hline
\multicolumn{2}{|c|}{Lists of $m_{i}$-tuples associated with $(2/3, 1/1)$}\\
\hline
$(2/3, 5/5)$ & $((\ldots , o, \underbrace{e, e/0), (e/0, e}, \ldots , e, e, e))$\\
\hline
$(2/5, 1/1)$ & $((\ldots , \underbrace{e), (o, o, o, e}, \ldots , e/0))$\\
\hline
$(6/7, 1/1)$ & $((\ldots , o, \underbrace{e, e/0, e, e}, e, e), (\ldots , e))$\\
\hline
\end{tabular}
\end{center}
Therefore, (1) and (2) hold, and the theorem is proved.
\end{proof}

\subsection{Proof of Thm.~\ref{main_theorem}(c): A Class of Diameter-\texorpdfstring{$2r$}{2r} Complete Trees}

\begin{proof}
We generalize our representation of depth-3 trees above to all trees $S$ as follows, where $S$ is a complete rooted tree with root $v$:
\begin{itemize}
\item If $S$ has depth 0 or 1, then we denote $S$ by $n$, where $v$ has $n$ children.
\item If $S$ has depth greater than 1, then we denote $S$ by $(S_{1}, \ldots , S_{k})$, where $S_{1}, \ldots , S_{k}$ are the connected components of $S\backslash v$.
\end{itemize}
By the conditions, if $S$ is a subtree of a tree satisfying the conditions of Thm.~\ref{main_theorem}(c), then our representation of $S$ has no 0's, and each set of parentheses encloses an odd number of trees.

Our intuition is that a tree $S$ functions in certain basic ways within larger trees, depending on the number of even integers in our representation of $S$:

\begin{center}
\begin{tabular}{l|p{9.8cm}}
\emph{\# even ints.} & \emph{$S$ can function as...}\\
\hline
$0\pmod{4}$ & a string of $o$'s.\\
\hline
$1\pmod{4}$ & a string of $o$'s, with one $e$ at the beginning or end.\\
\hline
$2\pmod{4}$ & a string of $o$'s, with two consecutive $e$'s in the middle.\\
\hline
$3\pmod{4}$ & \parbox{9.8cm}{a string of $o$'s, with two consecutive $e$'s in the middle,\\ and one $e$ at the beginning or end.}
\end{tabular}
\end{center}

To formalize this intuition, we define the \emph{ending} of a sequence, so that certain sequences have ending $\varnothing$, $E1$, $E2$, $E2'$, or $E3$, as follows:

\begin{center}
\begin{tabular}{l|p{4.7cm}|p{5.4cm}}
\emph{Ending} & \parbox{4.7cm}{\emph{Sequence of integers\\ (dfs.\ for Thm.\ 5(c))}} & \parbox{5.3cm}{\emph{Sequence of integers\\ (adjusted dfs.\ for Thm.\ 5(d))}}\\
\hline
$\varnothing$ & $\underbrace{n_{1}, \ldots , n_{m}}_{\clap{\parbox{2.5 cm}{\centering\footnotesize nicely att.}}}$ & (same)\\
\hline
$E1$ & $\underbrace{n_{1}, \ldots , n_{m}}_{\clap{\parbox{2.5 cm}{\centering\footnotesize nicely att.}}}, e$ & (same)\\
\hline
$E2$ & $\underbrace{n_{1}, \ldots , n_{m}}_{\clap{\parbox{2.5 cm}{\centering\footnotesize nicely att.}}}, e, e, \underbrace{o, \ldots , o}_{\clap{\parbox{2.5 cm}{\centering\footnotesize non-neg.\ even \#}}}$ & $\underbrace{n_{1}, \ldots , n_{m}}_{\clap{\parbox{2.5 cm}{\centering\footnotesize nicely att.}}}, e, e/0, \underbrace{o, \ldots , o}_{\clap{\parbox{2.5 cm}{\centering\footnotesize non-neg.\ even \#}}}$\\
\hline
$E2'$ & $\underbrace{n_{1}, \ldots , n_{m}}_{\clap{\parbox{2.5 cm}{\centering\footnotesize nicely att.}}}, e, e, \underbrace{o, \ldots , o}_{\clap{\parbox{2.5 cm}{\centering\footnotesize odd \#}}}$ & $\underbrace{n_{1}, \ldots , n_{m}}_{\clap{\parbox{2.5 cm}{\centering\footnotesize nicely att.}}}, e, e/0, \underbrace{o, \ldots , o}_{\clap{\parbox{2.5 cm}{\centering\footnotesize odd \#}}}$\\
\hline
$E3$ & $\underbrace{n_{1}, \ldots , n_{m}}_{\clap{\parbox{2.5 cm}{\centering\footnotesize nicely att.}}}, e, e, \underbrace{o, \ldots , o}_{\clap{\parbox{2.5 cm}{\centering\footnotesize non-neg.\ even \#}}}, e$ & $\underbrace{n_{1}, \ldots , n_{m}}_{\clap{\parbox{2.5 cm}{\centering\footnotesize nicely att.}}}, e, e/0, \underbrace{o, \ldots , o}_{\clap{\parbox{2.5 cm}{\centering\footnotesize non-neg.\ even \#}}}, e/0$\\
\end{tabular}
\end{center}

We associate a tree $S$ with a pair of endings $(\mathcal{E}_{1}, \mathcal{E}_{2})$ if $S$ corresponds to a sequence $\mathcal{S}$, such that appending $\mathcal{S}$ to any sequence with ending $\mathcal{E}_{1}$ gives a sequence with ending $\mathcal{E}_{2}$.  We can then combine trees; if $S_{1}, S_{2}$ are associated with $(\mathcal{E}_{1}, \mathcal{E}_{2}), (\mathcal{E}_{2}, \mathcal{E}_{3})$, respectively, then $(S_{1}, S_{2})$ is associated with $(\mathcal{E}_{1}, \mathcal{E}_{3})$.

Our intuition suggests the following claim:

\bigskip

\noindent\emph{Claim: If our representation of $S$ has no 0's, and each set of parentheses encloses an odd number of trees, then $S$ is associated with the following pairs, depending on the number of even integers in its representation:}
\begin{center}
\begin{tabular}{l|l}
\emph{\# even ints.} & \emph{Pairs of endings associated with $S$}\\
\hline
$0\pmod{4}$ & $(\varnothing, \varnothing), (E2, E2'), (E2', E2)$\\
\hline
$1\pmod{4}$ & $(\varnothing, E1), (E1, E2), (E2, E3), (E3, \varnothing)$\\
\hline
$2\pmod{4}$ & $(\varnothing, E2), (\varnothing, E2'), (E2, \varnothing), (E2', \varnothing)$\\
\hline
$3\pmod{4}$ & $(\varnothing, E3), (E1, \varnothing), (E2, E1), (E3, E2)$
\end{tabular}
\end{center}

\noindent\emph{Proof of Claim.}  We may assume that each set of parentheses in our representation of $S$ encloses one or three terms, by adding parentheses:
$$(S_{1}, \ldots , S_{k})\qquad\rightarrow\qquad(\cdots ((S_{1}, S_{2}, S_{3}), S_{4}, S_{5}), \ldots , S_{k})$$

It is easy to verify that $e$ and $o$ satisfy the claim, so by induction it suffices to prove by casework that if $S_{1}, S_{2}, S_{3}$ satisfy the claim, then $(S_{1}, S_{2}, S_{3})$ does also.  We omit the details, but the following observations are helpful:
\begin{itemize}
\item If $S_{1}, S_{2}$ each have an odd number of $e$'s, then $(S_{1}, S_{2})$ is associated with $(\varnothing, \varnothing)$ and $(E2, E2)$, or $(\varnothing, E2)$ and $(E2, \varnothing)$.
\item If $S_{1}, S_{2}$ each have an even number of $e$'s, then $(S_{1}, S_{2})$ is associated with $(\varnothing, \varnothing)$ and $(E2, E2)$, or $(\varnothing, E2)$ and $(E2, \varnothing)$.
\end{itemize}

Now we return to the original problem.  If the number of even integers in our representation of $S$ is not $3\pmod{4}$, then $S$ is associated with one of $(\varnothing, \varnothing), (\varnothing, E1), (\varnothing, E2)$.  Since the sequences associated with $\varnothing, E1, E2$ are attainable, $S$ is associated with an attainable sequence, as desired.
\end{proof}

\subsection{Proof of Thm.~\ref{main_theorem}(d): A Class of Diameter-\texorpdfstring{$2r$}{2r} Non-Complete Trees}

\begin{proof}
We use the same proof, adjusting the definitions of $E2, E2', E3$ as noted above to allow $e/0$'s, and including $(e, e, 0)$, $(e, o, 0)$ as base cases.
\end{proof}

\subsection{Proof of Thm.~\ref{main_theorem}(e): A Class of Diameter-6 Odd-Children Trees}

\begin{proof}
Rearrange the subtrees of $T$ at $v$ according to the numbers of internal vertices and leaves of distance 2 from $v$ in each subtree, as follows:
\begin{enumerate}[(1)]
\item positive odd number of internal vertices, no leaves
\item positive odd number of internal vertices, positive even number of leaves
\item positive even number of internal vertices, positive odd number of leaves
\item no internal vertices, positive odd number of leaves
\end{enumerate}
Let the neighbors of $v$ be $v_{1}, \ldots , v_{m}$, such that the subtrees are as follows:
$$\underbrace{v_{1}, \ldots , v_{i_{1} - 1}}_{\text{(1)}}, \underbrace{v_{i_{1}}, \ldots , v_{i_{2} - 1}}_{\text{(2)}},\underbrace{v_{i_{2}}, \ldots , v_{i_{3} - 1}}_{\text{(3)}},\underbrace{v_{i_{3}}, \ldots , v_{m}}_{\text{(4)}}$$
Starting with a gracefully labeled star with central vertex labeled 0, perform a $0\rightarrow n$ transfer, leaving $m$ vertices adjacent to the root.  Call them $v_{1}, \ldots , v_{m}$, in the order they would appear in a transfer context $(T, f, v_{1}, \ldots , v_{m}, n, 0, c, d)$.  Then perform the transfers
\begin{center}
\setlength{\tabcolsep}{1.35pt}
\begin{tabular}{lclclcl}
$v_{1}$ & $\rightarrow$ & $v_{2}$ & $\rightarrow\cdots\rightarrow$ & $v_{i_{3} - 2}$ & $\rightarrow$ & $v_{i_{3} - 1}$\\
$v_{i_{3} - 1}$ & $\rightarrow$ & $v_{i_{3} - 2}$ & $\rightarrow\cdots\rightarrow$ & $v_{i_{1} + 1}$ & $\rightarrow$ & $v_{i_{1}}$\\
$v_{i_{1}}$ & $\rightarrow$ & $v_{i_{1} + 1}$ & $\rightarrow\cdots\rightarrow$ & $v_{m - 1}$ & $\rightarrow$ & $v_{m}$
\end{tabular}
\end{center}

Thinking of this sequence as leaving behind an odd number of leaves once at each step, and twice at the turns at $v_{i_{3} - 1}, v_{i_{1}}$ (see Lemma~\ref{transfer_parity}), we can leave behind the following numbers of leaves at each step (referring to the internal vertices and leaves adjacent to the $v_{j}$ in each subtree):
\begin{center}
\setlength{\tabcolsep}{1.35pt}
\begin{tabular}{rclp{6cm}}
$v_{1},$ & $ \ldots ,$ & $v_{i_{1} - 1}$ & \qquad all of the internal vertices\\
$v_{i_{1}},$ & $ \ldots ,$ & $v_{i_{2} - 1}$ & \qquad all of the internal vertices\\
$v_{i_{2}},$ & $ \ldots ,$ & $v_{i_{3} - 1}$ & \qquad some of the internal vertices\\
$v_{i_{3} - 1},$ & $ \ldots ,$ & $v_{i_{2}}$ & \qquad rest of the internal vertices\\
$v_{i_{2} - 1},$ & $ \ldots ,$ & $v_{i_{1}}$ & \qquad some of the leaves\\
$v_{i_{1}},$ & $ \ldots ,$ & $v_{i_{2} - 1}$ & \qquad rest of the leaves\\
$v_{i_{2}},$ & $ \ldots ,$ & $v_{i_{3} - 1}$ & \qquad all of the leaves\\
$v_{i_{3}},$ & $ \ldots ,$ & $v_{m}$ & \qquad all of the leaves
\end{tabular}
\end{center}

After these transfers, the leaves left behind are in order (see Lemma~\ref{order}).  Therefore, we can obtain $T$ by performing the following sequence of transfers, stopping at the last leaf left at $v_{i_{2}}$ during the second pass:
$$v_{m}\rightarrow v_{m + 1}\rightarrow v_{m + 2}\rightarrow\cdots$$
Therefore, $T$ has a graceful labeling $f$ with $f(v) = 0$.
\end{proof}

We have now proved our main result, Thm.~\ref{main_theorem} (see p.~\pageref{main_theorem}).


\section{Conclusion}

In proving our result, we have both expanded the transfer technique, and consolidated it by removing the need for type-2 transfers. It is relevant to note that some previous papers have used type-2 transfers in novel ways not yet mentioned here, but it seems that even these uses of type-2 transfers could be replaced with type-1 papers.

For example, Hrn\v{c}iar \& Haviar~\cite{diameter5} use a type-2 $n\rightarrow 0$ transfer of branches as their last step. Specifically, see Fig.\ 11, 12 of Hrn\v{c}iar \& Haviar~\cite{diameter5}, where they use a $21\rightarrow 0$ transfer of the branches with vertices labeled $1, 3, 18, 20$. The same tree can be obtained by type-1 transfers as follows, beginning with a gracefully labeled star with central vertex labeled 0:
\begin{itemize}
\item Perform a type-1 transfer $0\rightarrow 21$ of the leaves labeled $2, \ldots , 19$.
\item Perform a type-1 transfer $21\rightarrow 0$ of the leaves labeled $3, \ldots , 18$.
\item Perform a type-1 transfer $0\rightarrow 21$ of the leaves labeled $4, \ldots , 17$.
\item Continue as in Hrn\v{c}iar \& Haviar~\cite{diameter5}.
\end{itemize}

An analogous remark applies to Mishra \& Panigrahi~\cite{mishra2005graceful, mishra2006graceful, mishra2008some, mishra2010some, mishra2011some}. The point here is not to criticize these arguments, but that one may choose to ignore type-2 transfers without losing any of the power of the transfer technique.

To extend our results to other diameter-6 trees, it is necessary to consider different sequences of transfers in higher levels, which have a complicated effect on the order of vertices in lower levels. Understanding this effect will be beneficial. Cahit's technique of \emph{canonic spiral labeling} is also relevant, since it produces some diameter-6 trees not included in our results (see slides 7-10 of Cahit~\cite{cahit_presentation}).  


\section{Acknowledgements}

\noindent Thanks to my advisor, Shiva Kintali, for his helpful guidance throughout.


\end{document}